\let\OLDthebibliography\thebibliography
\renewcommand\thebibliography[1]{
  \OLDthebibliography{#1}
  \setlength{\itemsep}{0pt plus 0.3ex}
}
\newtheorem{thm}{Theorem}[section]
\newtheorem{lemma}[thm]{Lemma}
\newtheorem{conj}[thm]{Conjecture}
\theoremstyle{definition}
\theoremstyle{remark}
\newtheorem{remark}[thm]{Remark}
\numberwithin{equation}{section}
\newcommand*\wrapletters[1]{\wr@pletters#1\@nil}
\def\wr@pletters#1#2\@nil{#1\allowbreak\if&#2&\else\wr@pletters#2\@nil\fi}
\def\alp{{\alpha}} 
\def\bet{{\beta}}  
\def\gam{{\gamma}} 
\def\del{{\delta}}
\def\kap{{\kappa}}
\def\lam{{\lambda}}
\def\ome{{\omega}}  
\def\eps{\varepsilon}
\def\le{\leqslant} \def\ge{\geqslant}  
\def \leq {\leqslant} \def \geq {\geqslant}
\def\d{{\,{\rm d}}}
\def \bN {\mathbb N}
\def \bR {\mathbb R}
\def \bZ {\mathbb Z}
\def \bx {\mathbf x}
\def \bzero {\mathbf 0}
\def \balp {{\boldsymbol{\alp}}}
\def \bgam {{\boldsymbol{\gam}}}
\def \bdel {{\boldsymbol{\del}}}
\def \cB {\mathcal B}
\def \cC {\mathcal C}
\begin{document}

\title[Counting multiplicative approximations]
{Counting multiplicative approximations}

\author[Sam Chow]{Sam Chow}
\address{Mathematics Institute, Zeeman Building, University of Warwick, Coventry CV4 7AL, United Kingdom}
\email{sam.chow@warwick.ac.uk}

\author[Niclas Technau]{Niclas Technau}
\address{Department of Mathematics, California Institute of Technology, 1200 E California Blvd., Pasadena, CA 91125,
USA}
\email{ntechnau@caltech.edu}

\subjclass[2020]{11J83}
\keywords{Metric diophantine approximation}
\thanks{}
\date{}

\maketitle

\begin{abstract} 
A famous conjecture of Littlewood (c. 1930) concerns approximating two real numbers by rationals of the same denominator, multiplying the errors. In a lesser-known paper, Wang and Yu (1981) established an asymptotic formula for the number of such approximations, valid almost always. Using the quantitative Koukoulopoulos--Maynard theorem of Aistleitner--Borda--Hauke, together with bounds arising from the theory of Bohr sets, we deduce lower bounds of the expected order of magnitude for inhomogeneous and fibre refinements of the problem.
\end{abstract}

\section{Introduction}

Khintchine's theorem \cite{Khi1924} is the foundational result of metric diophantine approximation. For $d \in \bN$, we denote by $\mu_d$ the $d$-dimensional Lebesgue measure. For $x \in \bR$, we write $\| x \| = \inf_{m \in \bZ} |x-m|$. The abbreviation i.o. stands for `infinitely often'. Throughout, let $k \ge 2$ be an integer, and let $\psi: \bN \to [0,1/2]$.

\begin{thm} [Variant of Khintchine, 1924]\label{thm: Khintchine} If $\psi$ is non-increasing then 
\[
\mu_1(\{ \alp \in [0,1]:  \| n \alp \| < \psi(n) \quad \mathrm{i.o.} \} )
=
\begin{cases}
1, &\text{if } \sum_{n=1}^\infty \psi(n) = \infty \\
0, &\text{if } \sum_{n=1}^\infty \psi(n) < \infty.
\end{cases}
\]
\end{thm}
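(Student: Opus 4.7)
The plan is to split into the two implications, which have very different characters.

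\emph{Convergence.} For each $n \in \bN$, set $E_n := \{\alp \in [0,1] : \|n\alp\| < \psi(n)\}$. Since $\psi(n) \le 1/2$, the set $E_n$ is contained in the union of intervals of radius $\psi(n)/n$ centred at the rationals $a/n$ with $0 \le a \le n$, so $\mu_1(E_n) \le 2\psi(n)$. Given $\sum_n \psi(n) < \infty$, the first Borel--Cantelli lemma immediately yields $\mu_1(\limsup_n E_n) = 0$; the monotonicity of $\psi$ plays no role here.

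\emph{Divergence.} The events $E_n$ are not independent, so the second Borel--Cantelli lemma does not apply directly. Instead, I would establish a pairwise quasi-independence estimate of the shape
\[
\sum_{n,m \le N} \mu_1(E_n \cap E_m) \ll \Bigl(\sum_{n \le N} \mu_1(E_n)\Bigr)^2,
\]
with an absolute implied constant. The intersection $\mu_1(E_n \cap E_m)$ is controlled by counting integer pairs $(a,b)$ with $|n\alp - a| < \psi(n)$ and $|m\alp - b| < \psi(m)$; an elementary geometry-of-numbers argument produces a main term of order $\psi(n)\psi(m)$ together with a diagonal contribution involving $\gcd(n,m)$, and the monotonicity of $\psi$ is precisely what is needed to absorb the diagonal contribution after summation. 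The Chung--Erdős inequality then yields $\mu_1(\limsup_n E_n) > 0$. Since each $E_n$ is invariant under $\alp \mapsto \alp + 1/q$ whenever $q \mid n$, the limsup event is invariant under every rational translation, and a Lebesgue density-point argument upgrades positivity to full measure.

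The main obstacle is the quasi-independence estimate: this is the technical heart of the theorem, and it is exactly where the monotonicity hypothesis on $\psi$ is essential. Without it, the divergence half can fail, as Duffin and Schaeffer showed, and the unconditional statement --- the Duffin--Schaeffer conjecture --- resisted proof for eighty years until the Koukoulopoulos--Maynard theorem cited in the abstract.
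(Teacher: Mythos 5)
The paper does not prove this statement: Theorem \ref{thm: Khintchine} is quoted as classical background with a citation to Khintchine's 1924 paper, so your proposal can only be judged on its own merits. The convergence half is fine: $\mu_1(E_n) \le 2\psi(n)$ and the first Borel--Cantelli lemma finish it, with no monotonicity needed.

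The divergence half has a genuine gap: the quasi-independence estimate $\sum_{n,m\le N}\mu_1(E_n\cap E_m) \ll \bigl(\sum_{n\le N}\mu_1(E_n)\bigr)^2$ is \emph{false} for the full (non-reduced) sets $E_n$, even for non-increasing $\psi$ with divergent sum. Writing $g=\gcd(m,n)$, the set $E_m\cap E_n$ contains an interval of radius $\min(\psi(m)/m,\psi(n)/n)$ around each of the $g$ rationals with denominator dividing $g$, so $\mu_1(E_m\cap E_n)\ge 2g\min(\psi(m)/m,\psi(n)/n)$; monotonicity gives $\min(\cdot)=\psi(n)/n$ for $m\le n$, and
\[
\sum_{m<n\le N}\gcd(m,n)\,\frac{\psi(n)}{n}
=\sum_{n\le N}\psi(n)\Bigl(\sum_{d\mid n}\frac{\varphi(d)}{d}-1\Bigr),
\]
where the arithmetic factor has average order $\asymp\log n$. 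Taking $\psi(n)=1/(n\log n)$, the left-hand side is $\gg\log N$, whereas $\bigl(\sum_{n\le N}2\psi(n)\bigr)^2\asymp(\log\log N)^2$. So the Chung--Erd\H{o}s ratio tends to $0$ and the method yields nothing, precisely in a regime where Khintchine's theorem applies. Monotonicity does \emph{not} absorb the diagonal term; this divisor-driven overlap is exactly the obstruction that motivated Duffin and Schaeffer to pass to reduced fractions. A correct route is either Khintchine's original continued-fraction argument, or the Duffin--Schaeffer theorem of 1941 applied to the reduced sets $A_n$ (whose extra hypothesis $\sum_{n\le N}\psi(n)\varphi(n)/n\gg\sum_{n\le N}\psi(n)$ follows for monotone $\psi$ by partial summation, since $\varphi(n)/n$ has positive mean value), or the quantitative counting theorems of Schmidt type. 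Separately, your zero-one law step is too quick: $E_n$ is invariant under $\alp\mapsto\alp+1/q$ only when $q\mid n$, so $\limsup_n E_n$ is not obviously invariant under rational translations; the standard fix is to invoke the Cassels--Gallagher zero-one law.
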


Gallagher's theorem \cite{Gal1962} is one of the standard generalisations of Khintchine's theorem, and is related to a famous conjecture of Littlewood. For $d \in \bN$ and $\alp_1, \ldots, \alp_d \in \bR$, write $\balp = (\alp_1,\ldots,\alp_d)$. 

\begin{thm}[Gallagher, 1962]\label{thm: Gallagher}
If $\psi$ is non-increasing then
\begin{align*}
&\mu_k (\{ \balp \in [0,1]^k:  \| n \alp_1 \| \cdots \| n \alp_k \| < \psi(n) \quad \mathrm{i.o.}\}) \\
&= \begin{cases} 1, &\text{if } \sum_{n=1}^\infty \psi(n) (\log n)^{k-1} = \infty \\
0, &\text{if } \sum_{n=1}^\infty \psi(n) (\log n)^{k-1} < \infty.
\end{cases}
\end{align*}
\end{thm}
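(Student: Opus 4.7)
The plan is to establish both halves via Borel--Cantelli arguments applied to
$$A_n = \{\balp \in [0,1]^k : \|n\alp_1\|\cdots\|n\alp_k\| < \psi(n)\}.$$
The starting point is a single-event volume estimate: since $\balp \mapsto n\balp$ preserves $\mu_k$ on the torus, $\mu_k(A_n)$ equals $2^k$ times the volume of $\{\bx \in [0,1/2]^k : x_1 \cdots x_k < \psi(n)/2^k\}$, and a standard calculation yields
$$\mu_k(A_n) \asymp \psi(n)\bigl(1+\log(1/\psi(n))\bigr)^{k-1}.$$

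For the convergence half, I would split according to whether $\psi(n) \ge n^{-2}$, in which case $\mu_k(A_n) \asymp \psi(n)(\log n)^{k-1}$ is directly controlled by the hypothesis, or $\psi(n) < n^{-2}$, in which case the monotonicity of $x(\log(1/x))^{k-1}$ near zero forces $\mu_k(A_n) \lesssim n^{-2}(\log n)^{k-1}$, summable on its own. The first Borel--Cantelli lemma then closes this half.

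For the divergence half I would deploy the Chung--Erd\H{o}s inequality
$$\mu_k(\limsup A_n) \ge \limsup_{N \to \infty} \frac{\bigl(\sum_{n \le N} \mu_k(A_n)\bigr)^2}{\sum_{m,n \le N} \mu_k(A_m \cap A_n)}.$$
The volume estimate, paired with monotonicity of $\psi$, makes the numerator infinite under the divergence hypothesis. The crux, and what I expect to be the main obstacle, is the quasi-independence bound
$$\mu_k(A_m \cap A_n) \ll \mu_k(A_m)\,\mu_k(A_n) \qquad (m \ne n),$$
which I would attack by Fourier expansion on $\bT^k$: the resulting cross-correlations reduce to exponential sums exhibiting a $\gcd(m,n)$-type cancellation, in the spirit of the manipulations underlying the Duffin--Schaeffer circle of ideas. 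Chung--Erd\H{o}s then yields $\mu_k(\limsup A_n) > 0$, and a $0$--$1$ law (ergodicity of the rational translation action on $[0,1]^k$, together with the tail character of $\limsup A_n$) upgrades this to full measure.
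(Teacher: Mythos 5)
This is a classical result that the paper only cites (to Gallagher's 1962 paper); it supplies no proof of its own, so I can only assess your proposal on its merits. Your single-event volume estimate is correct (it is exactly the lemma $\mu_k(\cB_k(\lam)) = \lam \sum_{s<k} (-\log \lam)^s/s!$ proved in the paper's appendix), and your treatment of the convergence half — splitting on $\psi(n) \ge n^{-2}$ versus $\psi(n) < n^{-2}$ and using monotonicity of $x(\log(1/x))^{k-1}$ near $0$ — is sound and complete.

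The divergence half, however, has a genuine gap at precisely the point you flag as the crux. The pointwise quasi-independence bound $\mu_k(A_m \cap A_n) \ll \mu_k(A_m)\mu_k(A_n)$ for all $m \ne n$ is \emph{false}: already for $k=1$, take $n = 2m$; every rational $a/m$ is also $2a/(2m)$, so $\mu_1(A_m \cap A_{2m}) \ge \psi(2m) \asymp \mu_1(A_{2m})$, which vastly exceeds $\mu_1(A_m)\mu_1(A_{2m}) \asymp \psi(m)\psi(2m)$ when $\psi$ is small. The same resonances between denominators sharing large common factors persist for $k \ge 2$. What one actually needs is the \emph{averaged} bound $\sum_{m,n \le N} \mu_k(A_m \cap A_n) \ll (\sum_{n \le N}\mu_k(A_n))^2$, and controlling the gcd-sum error terms that arise is the entire content of the divergence half; it is also exactly where the monotonicity of $\psi$ must enter (recall that for $k=1$ the divergence statement is false without monotonicity, by Duffin--Schaeffer, so no argument that never uses monotonicity can succeed). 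Gesturing at ``gcd-type cancellation in the spirit of Duffin--Schaeffer'' does not close this. Separately, your zero-one law step is misattributed: the set $\limsup A_n$ is \emph{not} invariant under rational translations ($\|n(\alp + p/q)\| = \|n\alp\|$ only when $q \mid n$), so ``ergodicity of the rational translation action'' does not apply; the correct tool is Gallagher's (or Cassels') zero-one law, a nontrivial theorem proved by a density argument along carefully chosen subsequences of invariances. With the averaged overlap estimate and the genuine zero-one law in hand, your outline would become the standard proof, but as written the two decisive steps are respectively false and unjustified.
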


\begin{conj}[Littlewood, c. 1930]
If $\alp, \bet \in \bR$ then
\[
\liminf_{n \to \infty} n \| n \alp \| \cdot \| n \bet \| = 0.
\]
\end{conj}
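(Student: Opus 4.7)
The statement in question is Littlewood's conjecture, unresolved since roughly 1930, so I will only describe the natural plan of attack and indicate where it breaks down, rather than pretend to furnish a proof. An easy preliminary reduction removes several cases: if $1,\alp,\bet$ are $\bQ$-linearly dependent, or if either of $\alp,\bet$ has unbounded continued-fraction partial quotients, then $\liminf_{n \to \infty} n \| n \alp \| = 0$ by Dirichlet (applied along the convergent denominators), and multiplying by the bounded factor $\| n \bet \| \le 1/2$ gives the conclusion along that subsequence. So the essential case is that both coordinates are badly approximable and $1,\alp,\bet$ are $\bQ$-linearly independent.

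The only paradigm that has made substantial progress on the hard case is the ergodic-theoretic one. Given a putative counterexample $(\alp,\bet)$, one forms the unipotent element $u(\alp,\bet) \in \mathrm{SL}_3(\bR)$ encoding $(1,\alp,\bet)$ and considers the orbit of $u(\alp,\bet) \mathrm{SL}_3(\bZ)$ in $X = \mathrm{SL}_3(\bR)/\mathrm{SL}_3(\bZ)$ under the diagonal group
\[
A = \{\diag(e^{t_1},e^{t_2},e^{t_3}) : t_1+t_2+t_3=0\}.
\]
The product $n \| n\alp \| \| n\bet \|$ is essentially the value of the product of the three linear forms $(n, n\alp-p, n\bet-q)$ evaluated at the best integer approximations, and this is controlled by how deeply the $A$-orbit penetrates the cusp of $X$ (via Mahler's compactness criterion). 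Thus a Littlewood counterexample corresponds to an $A$-orbit that stays in some fixed compact subset of $X$. The plan is then to classify $A$-invariant measures supported on such bounded orbits, to show they must be algebraic, and to argue that this would force a rational relation among $1,\alp,\bet$, contradicting the reduction above.

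\textbf{Main obstacle.} This is exactly where everything currently available falls short. The Einsiedler--Katok--Lindenstrauss theorem (2006) classifies $A$-invariant ergodic measures \emph{of positive entropy} for some one-parameter subgroup of $A$, and as a consequence the set of exceptions to Littlewood has Hausdorff dimension zero. However, a bounded $A$-orbit coming from a hypothetical counterexample can in principle have zero entropy under every such subgroup, and the classification says nothing in that regime. The metric techniques underlying the present paper --- the quantitative Koukoulopoulos--Maynard theorem of Aistleitner--Borda--Hauke, together with Bohr set inputs --- are intrinsically measure-theoretic and can at best establish the relevant conclusion almost surely, not for every pair. A genuine proof would require a new ingredient capable of ruling out zero-entropy bounded orbits, and no such ingredient is presently visible.
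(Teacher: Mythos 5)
This statement is labelled a \emph{conjecture} in the paper, and the paper contains no proof of it --- Littlewood's conjecture remains open, and it is quoted only as motivation for Gallagher's theorem and for the multiplicative counting problems that the paper actually addresses. Your refusal to manufacture a proof is therefore exactly the right response, and your summary of the state of the art is essentially accurate: the reduction to the badly approximable case, the reformulation in terms of bounded $A$-orbits on $\mathrm{SL}_3(\bR)/\mathrm{SL}_3(\bZ)$ via Mahler's criterion, and the fact that Einsiedler--Katok--Lindenstrauss only handles positive-entropy measures (yielding a Hausdorff-dimension-zero exceptional set rather than the full conjecture) are all correctly described. You are also right that the tools of this paper (Aistleitner--Borda--Hauke plus Bohr sets) are metric in nature and can only give almost-everywhere statements. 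One small inaccuracy: your opening reduction conflates two different cases. If $\alp$ (say) has unbounded partial quotients, then indeed $\liminf_n n\|n\alp\| = 0$ and the conjecture follows for that pair; but this comes from the continued-fraction estimate $q_k\|q_k\alp\| \asymp 1/a_{k+1}$, not from Dirichlet, which only gives $n\|n\alp\| \le 1$ infinitely often. And $\bQ$-linear dependence of $1,\alp,\bet$ does not by itself imply $\liminf_n n\|n\alp\| = 0$ (take $\bet = \alp$ badly approximable); that case is handled by a separate, Cassels--Swinnerton-Dyer-type argument. Neither slip affects your main point that the essential difficulty is the zero-entropy regime.
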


As well as having infinitely many good rational approximations, one might be interested in the number of such approximations up to a given height. Schmidt~\cite{Sch1960} demonstrated such a result, see \cite[Theorem 4.6]{Har1998}. 

\begin{thm} [Variant of Schmidt, 1960] 
\label{thm: Schmidt}
For $N \in \bN$ and $\balp \in \bR^k$, 
denote by $S(\balp,N)$ the number of $n \in \bN$ such that
\[
n \le N, \qquad \| n \alp_i \| < \psi(n) 
\quad (1 \le i \le k).
\]
Assume that $\psi$ is non-increasing, assume that
\[
\Psi_k(N) := \sum_{n \le N} (2 \psi(n))^k 
\]
is unbounded, and let $\eps > 0$. Then, for almost all $\balp \in \bR^k$, we have
\[
S(\balp,N) = \Psi_k(N) + O_{k,\eps}(\sqrt{\Psi_k(N)} (\log \Psi_k(N))^{2+\eps})
\qquad (N \to \infty).
\]
\end{thm}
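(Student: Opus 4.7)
By periodicity it suffices to consider $\balp \in [0,1]^k$. For $n \in \bN$, let
\[A_n := \{\balp \in [0,1]^k : \|n\alp_i\| < \psi(n),\ 1 \le i \le k\}, \qquad X_n := \mathbf{1}_{A_n}.\]
Since the defining conditions involve the coordinates $\alp_i$ independently, $\mu_k(A_n) = (2\psi(n))^k$, hence $\int_{[0,1]^k} S(\balp, N) \,\mathrm{d}\mu_k(\balp) = \Psi_k(N)$. The plan is a second moment argument: bound the variance
\[V(N) := \int_{[0,1]^k} \bigl(S(\balp,N) - \Psi_k(N)\bigr)^2 \,\mathrm{d}\mu_k(\balp),\]
then feed the result into a Gál--Koksma type variance lemma to obtain the pointwise estimate.

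\textbf{Variance bound.} The diagonal contribution to $V(N)$ is at most $\sum_{n \le N}(2\psi(n))^k = \Psi_k(N)$. For off-diagonal terms, coordinatewise independence gives
\[\mu_k(A_m \cap A_n) = \mu_1(B_{m,n})^k, \qquad B_{m,n} := \{\alp \in [0,1] : \|m\alp\| < \psi(m),\ \|n\alp\| < \psi(n)\}.\]
The classical one-dimensional overlap lemma (a Cassels/Gallagher-type estimate, see Harman, \emph{Metric Number Theory}, Ch.~2) provides a bound of the shape
\[\mu_1(B_{m,n}) \le 4\psi(m)\psi(n) + O\!\left(\frac{\gcd(m,n)}{mn}\right).\]
Raising to the $k$-th power and subtracting the main term $(4\psi(m)\psi(n))^k$, the excess is $\ll (\psi(m)\psi(n))^{k-1}\gcd(m,n)/(mn)$, using $k \ge 2$ and $\psi \le 1/2$. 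Summing over $m < n \le N$, the standard divisor bound $\sum_{m,n \le N}\gcd(m,n)/(mn) \ll (\log N)^2$ combined with the monotonicity of $\psi$ yields
\[V(N) \ll \Psi_k(N)(\log N)^{O(1)}.\]

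\textbf{From variance to the pointwise estimate.} With this variance bound in hand, the Gál--Koksma variance lemma (see Harman, \emph{Metric Number Theory}, Thm~3.1) converts the $L^2$ control into an almost-everywhere asymptotic. Applied along a dyadic subsequence where $\Psi_k(N)$ roughly doubles, with the monotonicity of $S(\balp,\cdot)$ and $\Psi_k(\cdot)$ interpolating between successive dyadic indices, one obtains for each $\eps>0$ and almost every $\balp$,
\[S(\balp, N) - \Psi_k(N) \ll_{k,\eps} \sqrt{\Psi_k(N)}(\log \Psi_k(N))^{2+\eps},\]
as required.

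\textbf{Main obstacle.} The heart of the argument is the variance estimate, and the essential input is the overlap lemma for $B_{m,n}$ together with the ensuing divisor sum. The hypothesis $k \ge 2$ is what makes the naive second moment method succeed: the extra factor $(\psi(m)\psi(n))^{k-1}$ from the $k$-fold product provides just enough decay to render the divisor sum summable against $\Psi_k(N)$ with only logarithmic loss. For $k=1$ this simple second moment approach breaks down, and the far deeper Duffin--Schaeffer and Koukoulopoulos--Maynard machinery alluded to in the abstract is needed.
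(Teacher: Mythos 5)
You should first be aware that the paper does not prove this statement at all: it is imported from Schmidt \cite{Sch1960} via \cite[Theorem 4.6]{Har1998}, so there is no internal proof to match. Your architecture (second moment plus a G\'al--Koksma/quantitative Borel--Cantelli lemma) is indeed the standard route, but the key technical input you state is false, and the gap it leaves is precisely where the work lies. The overlap bound $\mu_1(B_{m,n}) \le 4\psi(m)\psi(n) + O(\gcd(m,n)/(mn))$ fails: take $\psi \equiv \delta$ (non-increasing) and $n = 2m$; then $B_{m,2m} \supseteq \{\alpha : \|m\alpha\| < \delta/2\}$ has measure at least $\delta$, while your bound gives $4\delta^2 + O(1/m)$, which is violated whenever $1/m \ll \delta \ll 1$. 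The correct classical estimate (from counting intersecting pairs of intervals via the congruence $an - bm = c$) is
\[
\mu_1(B_{m,n}) \le 4\psi(m)\psi(n) + O\!\left(\gcd(m,n)\min\left(\frac{\psi(m)}{m},\frac{\psi(n)}{n}\right)\right),
\]
whose error term is in general far larger than $\gcd(m,n)/(mn)$. Consequently the divisor sum $\sum_{m,n\le N}\gcd(m,n)/(mn)$ is not the sum you need to control; after raising to the $k$-th power the genuine off-diagonal contribution is of the shape
\[
\sum_{m<n\le N}\left[(\psi(m)\psi(n))^{k-1}\gcd(m,n)\frac{\psi(n)}{n} + \left(\gcd(m,n)\frac{\psi(n)}{n}\right)^{k}\right],
\]
and bounding this acceptably is exactly where the monotonicity of $\psi$ and the hypothesis $k \ge 2$ enter; none of that work appears in your write-up.

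There is a second, independent problem: even granting a variance bound of the form $V(N)\ll\Psi_k(N)(\log N)^{O(1)}$, this is too weak for the stated conclusion, because $\log N$ can vastly exceed $\log\Psi_k(N)$ (the hypotheses permit, say, $\Psi_k(N)\asymp\log N$, so that $\log \Psi_k(N) \asymp \log\log N$). A G\'al--Koksma argument fed with $(\log N)^{O(1)}$ losses cannot return an error term of size $\sqrt{\Psi_k(N)}(\log\Psi_k(N))^{2+\eps}$. What the classical proofs actually establish is a \emph{windowed} variance bound, namely that the second moment of $\sum_{M<n\le N}(\mathbf{1}_{A_n}-\mu_k(A_n))$ is $\ll_k \Psi_k(N)-\Psi_k(M)$ with no $\log N$ loss, after which the quantitative Borel--Cantelli lemma delivers the $(\log\Psi_k)^{2+\eps}$ exponent. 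To repair the proposal you would need to carry out the corrected overlap-plus-summation argument over arbitrary windows, or simply cite \cite[Theorem 4.6]{Har1998} as the authors do.
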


Wang and Yu \cite{WY1981} established a counting version of Gallagher's theorem. We state a variant of this below, deducing it from \cite[Theorem 4.6]{Har1998} in the appendix. For $N \in \bN$ and $\balp, \bgam \in \bR^k$, 
denote by $S^\times_\bgam(\balp,N,\psi)$ the number of $n \in \bN$ 
satisfying
\begin{equation}\label{eq: Littlewood product small}
n \le N, \qquad \| n \alp_1 - \gam_1 \| \cdots \| n \alp_k - \gam_k \| < \psi(n).
\end{equation}
For $N \in \bN$, define
\[
\Psi_k^\times(N) = \frac{1}{(k-1)!} \sum_{n \le N} \psi(n) (- \log(2^k \psi(n)))^{k-1}
\]
and
\[
\tilde \Psi_k^\times(N) = \sum_{n \le N} \psi(n) (\log n)^{k-1}.
\]
In our definition of $\Psi_k^\times(N)$, we adopt the convention that
\[
x (- \log x)^d \mid_{x = 0} \: = 0 \qquad (d \in \bR).
\]

\begin{remark}
In standard settings, we have
\[
-\log \psi(n) \asymp \log n
\qquad (n \ge 2),
\]
and correspondingly 
\[
\Psi_k^\times(N) \asymp \tilde \Psi_k^\times(N).
\]
\end{remark}

\begin{thm} [Variant of Wang--Yu, 1981] \label{WangYu} 
Assume that $\psi$ is non-increasing, that $\psi(n) \to 0$ as $n \to \infty$, and that $\Psi_k^\times(N)$ is unbounded. Then, uniformly for almost all $\balp \in \bR^k$, we have
\[
S^\times_\bzero(\balp,N,\psi) \sim \Psi_k^\times (N) 
\qquad (N \to \infty).
\]
\end{thm}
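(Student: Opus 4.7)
The plan is to view $S^\times_\bzero$ as a Schmidt-type counting problem and appeal to Harman's Theorem 4.6. For each $n \in \bN$, let
\[
E_n = \{\balp \in [0,1]^k : \|n\alp_1\| \cdots \|n\alp_k\| < \psi(n)\},
\]
so that $S^\times_\bzero(\balp,N,\psi) = \#\{n \le N : \balp \in E_n\}$. Provided the $E_n$ satisfy a pairwise quasi-independence condition and $\sum_n \mu_k(E_n) = \infty$, Harman's theorem delivers the conclusion that for almost every $\balp$,
\[
S^\times_\bzero(\balp,N,\psi) \sim \sum_{n \le N} \mu_k(E_n) \qquad (N \to \infty).
\]

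First I would evaluate $\mu_k(E_n)$. Since $\|n\alp\|$ is uniform on $[0,1/2]$ with density $2$ when $\alp$ is uniform on $[0,1]$ and $n \ge 1$, Fubini reduces the computation to the Lebesgue measure of $\{\by \in [0,1/2]^k : y_1 \cdots y_k < \psi(n)\}$, and standard iterated integration yields
\[
\mu_k(E_n) = \frac{2^k \psi(n) (-\log(2^k \psi(n)))^{k-1}}{(k-1)!} + O(\psi(n) (-\log \psi(n))^{k-2}).
\]
Summing, the hypothesis that $\Psi_k^\times(N)$ is unbounded ensures that $\sum_{n \le N} \mu_k(E_n)$ tends to infinity and is asymptotic to a fixed positive multiple of $\Psi_k^\times(N)$, supplying the main term on the right-hand side (with the normalisation prescribed by the stated definition of $\Psi_k^\times$).

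The heart of the matter is verifying the quasi-independence bound
\[
\mu_k(E_m \cap E_n) \ll \mu_k(E_m) \, \mu_k(E_n) \qquad (m \ne n).
\]
Since $E_n$ is multiplicative rather than product-structured, I would dyadically decompose $E_n = \bigsqcup_\bj E_n^{(\bj)}$, where $E_n^{(\bj)}$ stipulates $2^{-j_i - 1} \le \|n\alp_i\| < 2^{-j_i}$ for each $i$; each $E_n^{(\bj)}$ is then a disjoint union of axis-aligned boxes whose intersections with analogous pieces of $E_m$ factor coordinatewise and are controlled by standard one-dimensional estimates for the joint density of $\|m\alp\|$ and $\|n\alp\|$. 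Summing over the dyadic profiles $\bi, \bj$ restores the $(-\log \psi)^{k-1}$ factors, while the diagonal error terms are absorbed by the divergence of $\Psi_k^\times(N)$. I expect the delicate point to be matching the logarithmic powers on the two sides of the quasi-independence inequality so that the implied constants do not erode the main term; monotonicity of $\psi$ and the decay $\psi(n) \to 0$ are exactly the tools needed to synchronise the dyadic sums with the weights in $\Psi_k^\times(N)$. Once this is achieved, Harman's Theorem 4.6 concludes the proof.
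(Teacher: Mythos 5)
The pivotal step in your plan --- establishing the ``quasi-independence bound'' $\mu_k(E_m\cap E_n)\ll\mu_k(E_m)\,\mu_k(E_n)$ --- is the wrong estimate for the conclusion you are after. A one-sided bound of this shape, together with $\sum_n\mu_k(E_n)=\infty$, feeds into the Chung--Erd\H{o}s/divergence Borel--Cantelli machinery and yields only a lower bound $S^\times_\bzero(\balp,N,\psi)\gg\sum_{n\le N}\mu_k(E_n)$ almost everywhere; no theorem converts it into the asymptotic $S^\times_\bzero\sim\sum_{n\le N}\mu_k(E_n)$. To get an asymptotic with a Schmidt-type error term one needs the variance estimate
\[
\sum_{m,n\le N}\bigl(\mu_k(E_m\cap E_n)-\mu_k(E_m)\mu_k(E_n)\bigr)\ll\sum_{n\le N}\mu_k(E_n),
\]
that is, independence up to a summable error, which is qualitatively stronger than domination by a constant times the product. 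Your dyadic decomposition is therefore aimed at the wrong target, and executing the correct version by hand would amount to reproving Harman's Theorem 4.6 for these sets. The paper does none of this: Harman's Theorem 4.6 is invoked as a black box for the regions $\cC_k(\psi(n))\subseteq[0,1/2]^k$ (countable unions of boxes; the factor $2^k$ comes from symmetry), and the entire content of the appendix is the volume computation identifying the resulting main term $T_k(N)=2^k\sum_{n\le N}\mu_k(\cC_k(\psi(n)))$.

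That identification is where your write-up is thinnest. The paper computes $\mu_k(\cB_k(\lam))=\lam\sum_{s=0}^{k-1}(-\log\lam)^s/s!$ exactly by induction, deduces $T_k(N)=\Psi_k^\times(N)+O_k(\Psi_{k-1}^\times(N))+O_{k,\psi}(1)$, and then uses the hypothesis $\psi(n)\to0$ precisely to obtain $\Psi_{k-1}^\times(N)+1=o(\Psi_k^\times(N))$; that is the sole role of this hypothesis, not the ``synchronisation of dyadic sums'' you ascribe to it. Moreover, the theorem asserts asymptotic equality with constant $1$, so ``asymptotic to a fixed positive multiple of $\Psi_k^\times(N)$'' does not finish the proof: the leading term you display, $2^k\psi(n)(-\log(2^k\psi(n)))^{k-1}/(k-1)!$, sums to $2^k\Psi_k^\times(N)$, and this factor of $2^k$ must be reconciled with the normalisation coming from Harman's theorem (a point on which the paper's own final display also deserves a careful second look) before the stated asymptotic is actually established.
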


\begin{remark} Without the assumption that $\psi(n) \to 0$, there is a less explicit asymptotic main term, namely $T_k(N)$ as defined in the appendix. As can be seen from the proof therein, the assumption that $\psi(n) \to 0$ is necessary for Theorem \ref{WangYu} as stated.
\end{remark}

In this note, we address natural inhomogeneous and fibre refinements of this problem, as popularised by Beresnevich--Haynes--Velani \cite{BHV2020}. Our findings are enumerative versions of some of our previous results \cite{Cho2018, CT2019, CT2021}. 

\begin{thm} \label{MainThm} Let $\bgam \in \bR^k$ with $\gam_k = 0$, and let $\kap > 0$. Assume that $\psi$ is non-increasing, that $\psi(n) < n^{-\kap}$ for all $n$, and that $\tilde \Psi_k^\times(N)$ is unbounded. Then, for almost all $\balp \in \bR^k$, we have
\[
S^\times_\bgam(\balp,N,\psi) \gg \tilde \Psi_k^\times (N) \qquad (N \to \infty).
\]
The implied constant only depends on $k$.
\end{thm}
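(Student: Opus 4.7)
The approach is to fibre the problem over the first $k-1$ coordinates, exploiting the hypothesis $\gam_k = 0$. For $\balp' := (\alp_1, \ldots, \alp_{k-1})$ fixed, put
\[
\Phi_{\balp'}(n) := \min\left\{ \tfrac12, \ \frac{\psi(n)}{\prod_{i < k} \|n\alp_i - \gam_i\|}\right\},
\]
with the convention $\psi(n)/0 = +\infty$. Any $n$ satisfying \eqref{eq: Littlewood product small} automatically satisfies $\|n\alp_k\| < \Phi_{\balp'}(n)$, so by Fubini it suffices to show that for almost every $\balp' \in \bR^{k-1}$ and almost every $\alp_k \in \bR$,
\[
\#\{n \le N : \|n \alp_k\| < \Phi_{\balp'}(n)\} \gg \tilde \Psi_k^\times(N).
\]

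First I would apply the quantitative Koukoulopoulos--Maynard theorem of Aistleitner--Borda--Hauke to the one-dimensional homogeneous problem for $\alp_k$. Treating $\Phi_{\balp'}$ as a general approximation function $\bN \to [0,1/2]$, this yields, for any fixed $\balp'$ and almost every $\alp_k$,
\[
\#\{n \le N : \|n\alp_k\| < \Phi_{\balp'}(n)\} \gg \sum_{n \le N} \Phi_{\balp'}(n) \cdot \frac{\phi(n)}{n},
\]
whenever the right-hand side tends to infinity. The remaining task is to show that, for almost every $\balp'$,
\[
\sum_{n \le N} \Phi_{\balp'}(n) \cdot \frac{\phi(n)}{n} \gg \tilde \Psi_k^\times(N).
\]

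To establish this, I would dyadically decompose the sum according to the sizes of $\|n\alp_i - \gam_i\|$ for $i<k$. For $\ba = (a_1, \ldots, a_{k-1}) \in \bZ_{\ge 0}^{k-1}$, let $B(\ba; N)$ be the set of $n \le N$ with $\|n\alp_i - \gam_i\| \asymp 2^{-a_i}$ for every $i < k$; this is essentially an inhomogeneous Bohr set of rank $k-1$. A standard density estimate, proved via a second-moment computation in $\balp'$ combined with Borel--Cantelli, should give that almost surely
\[
\#B(\ba; N) \gg N \cdot 2^{-(a_1 + \cdots + a_{k-1})}
\]
for all $\ba$ in the relevant range. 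On $B(\ba; N)$ one has $\Phi_{\balp'}(n) \asymp \psi(n) \cdot 2^{\sum a_i}$ provided $2^{\sum a_i} \le 1/(2\psi(n))$, so after discarding the sparse contribution of $n$ with abnormally small $\phi(n)/n$, the profile $\ba$ contributes $\gg N\psi(N)$ for $n \asymp N$. Summing over the simplex $\sum_{i<k} a_i \le \log(1/\psi(n))$ produces the volume factor $(\log n)^{k-1}/(k-1)!$ and reproduces $\tilde \Psi_k^\times(N)$ up to an absolute constant. The hypothesis $\psi(n) < n^{-\kap}$ ensures the simplex has side length of order $\log n$, which is what makes the dyadic range sufficiently rich.

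The main obstacle will be obtaining the Bohr set lower bound with enough uniformity in $\ba$ and $N$ to handle all $\asymp (\log N)^{k-1}$ dyadic profiles almost surely in $\balp'$, particularly near the boundary of the density regime where the inhomogeneous shift $\bgam'$ can distort the count. A careful quantitative equidistribution argument, in the spirit of our earlier work on the fibred Littlewood problem, together with a union bound over dyadic $N$ and a Chebyshev-type second moment in $\balp'$, should deliver the required uniformity and thereby complete the proof.
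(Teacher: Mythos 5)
Your overall architecture coincides with the paper's: reduce to a fibre statement in $\alp_k$, apply the Aistleitner--Borda--Hauke theorem to the induced approximating function on the fibre, and verify the required divergence of $\sum_{n\le N}\Phi_{\balp'}(n)\varphi(n)/n$ by decomposing into inhomogeneous Bohr sets. One small logical slip first: you justify the reduction by noting that every solution of \eqref{eq: Littlewood product small} satisfies $\|n\alp_k\|<\Phi_{\balp'}(n)$, but that inclusion points the wrong way for a lower bound; what you actually need (and what is true, since $\Phi_{\balp'}(n)\le\psi(n)/\prod_{i<k}\|n\alp_i-\gam_i\|$ and $\gam_k=0$) is the converse implication.

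The serious issue is that essentially all of the content of the theorem sits in the step you defer: the almost-sure lower bound $\#B(\ba;N)\gg N2^{-\sum_i a_i}$ --- in fact the weighted version $\sum_{n\in B(\ba;N)}\varphi(n)/n\gg N2^{-\sum_i a_i}$, which is what the ABH divergence condition requires and which does not follow from the unweighted count alone --- uniformly over all $\asymp(\log N)^{k-1}$ dyadic profiles and all large $N$. The paper does not obtain this by a second-moment plus Borel--Cantelli argument in $\balp'$. It instead invokes the deterministic structure theory of Bohr sets from \cite{CT2019} (the inner and outer structure lemmas, resting on the first finiteness theorem for generalised arithmetic progressions and on Davenport's lattice-point count), which gives two-sided bounds $\#B_{\balp}^{\bgam}(N;\bdel)\asymp_k\del_1\cdots\del_{k-1}N$ for \emph{every} fixed $\balp'$ of multiplicative exponent $w<\frac{k-1}{k-2}$ and every shift $\bgam$, provided each $\del_i\ge N^{-\sqrt{\eps}}$; Gallagher's theorem then supplies $w=1$ for almost every $\balp'$, and Fubini finishes. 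Your proposed metrical route is not obviously doomed in the regime $\prod_i 2^{-a_i}\ge N^{-(k-1)\sqrt{\eps}}$, but it is precisely the hard part: the variance of an inhomogeneous rank-$(k-1)$ Bohr-set count is not a routine computation, the union bound must be summable over all $N$ and all profiles simultaneously, and the $\varphi(n)/n$ weight needs a separate argument (the paper uses a lattice-point estimate and the convergence of $\sum_p p^{-1-\eps}$; ``discarding $n$ with small $\varphi(n)/n$'' does not by itself show the Bohr set is not concentrated on such $n$). As written, the proposal states the key lemma rather than proving it.
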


The \emph{multiplicative exponent} of $\balp \in \bR^d$ is
\[
\ome^\times(\balp) =
\sup \{ w: \| n \alp_1 \| \cdots \| n \alp_d \| < n^{-w} \quad \mathrm{i.o.} \}.
\]
Specialising $k = d$ and $\psi(n)=
(n (\log n)^{d+1})^{-1}$ 
in Gallagher's Theorem \ref{thm: Gallagher}, we see that $\ome^\times(\balp)=1$
for almost all $\balp\in \bR^d$. Thus, Theorem \ref{MainThm} is implied by the following fibre statement.

\begin{thm} \label{FibreThm} Let $\kap > 0$. Assume that $\psi$ is non-increasing, that $\psi(n) < n^{-\kap}$ for all $n$, and that $\tilde \Psi_k^\times(N)$ is unbounded. Let $\gam_1,\ldots,\gam_{k-1} \in \bR$, and suppose $(\alp_1,\ldots,\alp_{k-1})$ has multiplicative exponent $w <  \frac{k-1}{k-2}$, where $\frac{k-1}{k-2} \big |_{k=2} = \infty$. Then, for almost all $\alp_k$, we have
\[
S^\times_{(\gam_1,\ldots,\gam_{k-1},0)}((\alp_1,\ldots,\alp_k),N,\psi) \gg \tilde \Psi_k^\times (N) \qquad (N \to \infty).
\]
The implied constant only depends on $k, w,$ and $\kap$.
\end{thm}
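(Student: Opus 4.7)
The plan is to fix the first $k-1$ coordinates and thereby reduce matters to a quantitative one-dimensional Khintchine-type problem for $\alp_k$. Fix $\balp' = (\alp_1,\ldots,\alp_{k-1})$ of multiplicative exponent $w < (k-1)/(k-2)$ together with $\bgam' = (\gam_1,\ldots,\gam_{k-1})$, and set
\[
\Phi(n) = \prod_{i=1}^{k-1} \|n\alp_i - \gam_i\|, \qquad \tilde\psi(n) = \min\!\left(\tfrac{1}{2}, \frac{\psi(n)}{\Phi(n)}\right),
\]
with the convention $\psi(n)/0 = +\infty$. Since $\tilde\psi(n) \le \psi(n)/\Phi(n)$, every $n \le N$ satisfying $\|n\alp_k\| < \tilde\psi(n)$ is counted by $S^\times_{(\bgam',0)}((\balp',\alp_k),N,\psi)$, so it suffices to bound $\#\{n \le N : \|n\alp_k\| < \tilde\psi(n)\}$ from below.

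To this one-variable count I would apply the quantitative Koukoulopoulos--Maynard theorem of Aistleitner--Borda--Hauke, which imposes no monotonicity hypothesis on the approximating function. This yields, for almost every $\alp_k$,
\[
\#\{n \le N : \|n\alp_k\| < \tilde\psi(n)\} \gg \sum_{n \le N} \tilde\psi(n),
\]
provided the right-hand side diverges; the Duffin--Schaeffer weight $\varphi(n)/n$ that properly appears is essentially harmless because $\tilde\psi$ enjoys no conspiracy with the totient function.

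The crux is therefore to prove $\sum_{n \le N} \tilde\psi(n) \gg \tilde\Psi_k^\times(N)$. I would decompose $n$ dyadically as $n \in (M, 2M]$, and within each block make a secondary dyadic partition by the size of $\Phi(n)$. Monotonicity of $\psi$ gives $\psi(n) \asymp \psi(M)$ on a block, so the block contribution is controlled by $\psi(M) \sum 1/\Phi(n)$ summed over $n \in (M,2M]$ with $\Phi(n) \ge 2\psi(M)$. The multiplicative exponent hypothesis $w < (k-1)/(k-2)$ enables a Bohr-set lower bound of the shape
\[
\#\{n \in (M, 2M] : \Phi(n) \in [y, 2y]\} \gg M y (\log(1/y))^{k-2},
\]
uniformly for $y \in [2\psi(M), 1]$; this is precisely the regime, isolated in the authors' earlier fibre work \cite{CT2019, CT2021}, where the $k-1$ factors decouple enough that $\Phi(n)$ obeys a product-measure heuristic. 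Summing $1/\Phi(n)$ over the secondary dyadic scales produces $\asymp M(\log(1/\psi(M)))^{k-1}$, and the hypothesis $\psi(n) < n^{-\kap}$ forces $\log(1/\psi(M)) \asymp \log M$, so each block contributes $\gg M \psi(M) (\log M)^{k-1}$. Summing over dyadic $M \le N$ reproduces $\tilde\Psi_k^\times(N)$ up to a constant depending only on $k, w,$ and $\kap$.

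The principal obstacle is the Bohr-set lower bound invoked above for an individual $\balp'$ constrained solely by its multiplicative exponent: the hypothesis $w < (k-1)/(k-2)$ is used precisely to preclude anomalous alignments among the factors $\|n\alp_i - \gam_i\|$ that would concentrate $\Phi(n)$ at the wrong scales. Once that analytic ingredient is in hand, its combination with the ABH counting input delivers the theorem.
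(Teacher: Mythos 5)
Your proposal is essentially the paper's proof: isolate $\alp_k$, feed the non-monotone approximating function $\psi(n)/\prod_{i<k}\|n\alp_i-\gam_i\|$ --- kept inside $[0,1/2]$, by a min-truncation in your version and by restricting to the set $G=\{n:\|n\alp_i-\gam_i\|\ge n^{-\sqrt{\eps}}\}$ in the paper's, which is exactly where the hypothesis $\psi(n)<n^{-\kap}$ enters --- into the Aistleitner--Borda--Hauke theorem, and then lower-bound the resulting series by $\tilde\Psi_k^\times(N)$ via the Bohr-set counting machinery of \cite{CT2019}. Two small caveats: the Bohr-set lower bound is only available for dyadic scales $y\ge M^{-c\sqrt{\eps}}$ rather than all the way down to $2\psi(M)$ (the shorter range still produces the needed $(\log M)^{k-1}$), and your remark that the weight $\varphi(n)/n$ is ``essentially harmless'' is precisely the nontrivial equidistribution statement of \cite[Section 4]{CT2019}, which the paper invokes explicitly and re-examines (Lemma \ref{uniform}) to make the implied constant depend only on $k$, $w$ and $\kap$.
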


A natural strategy to prove
Theorem \ref{FibreThm} is 
to isolate the metric parameter $\alp_k$ 
to one side of the inequality 
\eqref{eq: Littlewood product small}. Indeed, defining
\begin{equation} \label{PhiDef}
\Phi(n)= \frac{\psi(n)}{\| n \alp_1 - \gam_1 \| \cdots 
\| n \alp_{k-1} - \gam_{k-1} \|},
\end{equation}
the quantity
\[
S^\times_{(\gam_1,\ldots,\gam_{k-1},0)}((\alp_1,\ldots,\alp_k),N,\psi)
\]
counts positive integers $n\leq N$ satisfying
$\| n \alp_{k} \| < \Phi(n)$. If $\Phi$ were monotonic, then one could try to apply
Theorem \ref{thm: Schmidt}. The basic problem with this approach is that $\Phi$
is far from being monotonic.
Khintchine's theorem is false without the monotonicity assumption, as was shown by Duffin and Schaeffer~\cite{DS1941}. They proposed a modification of it, not requiring monotonicity of the approximating function, that was open for almost 80 years and only recently settled
by Koukoulopoulos and Maynard \cite{KM2020}.
We rely heavily on a very recent quantification 
by Aistleitner, Borda, and Hauke. Recall that $\psi: \bN \to [0,1/2]$.

\begin{thm} [Aistleitner--Borda--Hauke, 2022+]
\label{ABH}
Let $C>0$. 
For $\alpha \in \bR$, let $S(\alpha, N)$ denote 
the number of coprime pairs $(a,n) \in \bZ \times \bN$ 
such that
$$
n \le N, \qquad 
\left \vert \alpha - \: \frac{a}{n} \right \vert \leq \frac{\psi(n)}{n} .
$$
If 
\[
\Psi(N):= \sum_{n\leq N} 2\frac{\varphi (n)}{n}\psi(n) 
\]
is unbounded then, for almost all $\alp \in \bR$, we have
$$
S(\alpha, N) = \Psi(N) (1+O_C((\log \Psi(N))^{-C}))
$$
as $N\rightarrow \infty$.
\end{thm}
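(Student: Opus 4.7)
The plan is a variance/second-moment method. Denote by $E_n \subset [0,1]$ the set of $\alp$ for which there exists $a \in \bZ$ with $(a,n)=1$ and $|\alp - a/n| \le \psi(n)/n$, so that $S(\alp, N) = \sum_{n \le N} \mathbf{1}_{E_n}(\alp)$ and $\mu_1(E_n) = 2\varphi(n)\psi(n)/n$ (for $n$ large enough that the intervals fit in $[0,1]$ without overlap). Expanding the square yields
\[
\int_0^1 (S(\alp, N) - \Psi(N))^2 \, d\alp = O(\Psi(N)) + 2 \sum_{m < n \le N} \bigl[ \mu_1(E_m \cap E_n) - \mu_1(E_m) \mu_1(E_n) \bigr],
\]
so the task reduces to controlling the off-diagonal correlations between the $E_n$.

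The central input is a quantitative refinement of Koukoulopoulos--Maynard \cite{KM2020}. Their proof of the Duffin--Schaeffer conjecture proceeds via an iteration on ``GCD graphs'': if a weighted bipartite graph on denominators carries too much mass on pairs with anomalous overlap, one can extract a denser subgraph with even more anomalous overlap, producing a contradiction after enough iterations. To make this quantitative, I would track the mass lost at each pruning step in order to extract a bound of the shape
\[
\sum_{m, n \in \cS} \bigl(\mu_1(E_m \cap E_n) - \mu_1(E_m) \mu_1(E_n)\bigr)_+ \ll_C (\log \sigma)^{-2C-2} \sigma^2,
\qquad \sigma := \sum_{n \in \cS} \mu_1(E_n),
\]
uniformly for finite sets $\cS$ of denominators. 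The main obstacle sits precisely here: one must reorganise the KM iteration so that $\sim \log \sigma$ many pruning rounds each cost only a bounded multiplicative factor, then feed this into the pair-correlation sum -- this is where all the effective content of the Aistleitner--Borda--Hauke argument has to be invested.

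With the quantitative overlap estimate in hand, I would decompose $[1,N]$ dyadically and bound both the within-block correlations (via the displayed inequality) and the between-block correlations (via the classical Pollington--Vaughan type estimates, which are already essentially sharp for well-separated ranges), and sum to obtain
\[
\int_0^1 (S(\alp, N) - \Psi(N))^2 \, d\alp \ll_C \Psi(N)^2 \, (\log \Psi(N))^{-2C-2}.
\]

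Finally, to convert this $L^2$-estimate into a pointwise asymptotic, I would apply Chebyshev's inequality along a subsequence $(N_j)$ chosen so that $\Psi(N_{j+1}) - \Psi(N_j) \asymp \Psi(N_j) (\log \Psi(N_j))^{-C-1}$, followed by Borel--Cantelli to control $S(\alp, N_j)$ for almost every $\alp$, and then interpolate between consecutive $N_j$ using the monotonicity of $N \mapsto S(\alp, N)$ together with the corresponding monotonicity of $\Psi$. Adjusting $C$ suitably produces the stated asymptotic $S(\alp, N) = \Psi(N)(1 + O_C((\log \Psi(N))^{-C}))$ for almost all $\alp$.
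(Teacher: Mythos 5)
First, note what you are being compared against: the paper does not prove Theorem \ref{ABH} at all. It is an imported black box, stated with attribution and cited as \cite{ABH}; the authors' own contribution lies elsewhere (the Bohr-set lemma and the verification of the unboundedness hypothesis). So there is no in-paper proof for your argument to match, and your proposal has to stand or fall as a proof of the quantitative Koukoulopoulos--Maynard theorem itself.

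As such, your outline correctly reproduces the architecture of the actual Aistleitner--Borda--Hauke argument: second moment of $S(\alp,N)-\Psi(N)$, a quantitative overlap (pair-correlation) estimate extracted from the GCD-graph iteration of \cite{KM2020}, then Chebyshev and Borel--Cantelli along a sparse sequence $(N_j)$ with sandwiching via monotonicity of $S$ and $\Psi$. But the step you flag as ``where all the effective content has to be invested'' is not a routine bookkeeping upgrade of \cite{KM2020}; it is the entire theorem. The Koukoulopoulos--Maynard iteration as written loses unspecified constants at each of an unbounded number of quality-increment steps, and reorganising it so that the total loss is an arbitrary power of $\log\sig$ (uniformly in the set $\cS$, and with the anomalous mass isolated as a positive-part sum as in your display) is precisely the technical achievement occupying most of the ABH paper. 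Asserting the target inequality and deferring its proof leaves the argument with a genuine gap rather than a proof. Two smaller points: the dyadic between-block reduction is not needed and, as stated, is misleading --- Pollington--Vaughan-type bounds control overlaps by a multiplicative factor times $\mu_1(E_m)\mu_1(E_n)$ and do not by themselves yield a power-of-$\log$ saving for well-separated blocks; the quantitative overlap estimate has to be applied to essentially all pairs. Also, $\mu_1(E_n)=2\varphi(n)\psi(n)/n$ holds for every $n$ under the standing assumption $\psi(n)\le 1/2$, not just for large $n$.
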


Our proof of Theorem \ref{FibreThm} also involves the theory of Bohr sets, as developed in our previous work \cite{Cho2018, CT2019}, which we use to verify the unboundedness condition in Theorem \ref{ABH}. In general $\Phi(n)$, as defined in \eqref{PhiDef}, will not lie in $[0,1/2]$, but the condition $\psi(n) < n^{-\kap}$ enables us to circumvent this and ultimately to apply Theorem \ref{ABH} to an allied approximating function. 

\begin{remark} We do not believe that the condition
\[
\psi(n) \in [0,1/2] \qquad (n \in \bN)
\]
is necessary in Theorem \ref{ABH}, though it is currently an assumption. It is necessary for many of the other theorems stated here, owing to the use of the distance to the nearest integer function $\| \cdot \|$. If one could relax this condition, then the condition that $\psi(n) < n^{-\kap}$ for all $n$ could be removed from Theorems \ref{MainThm} and \ref{FibreThm} but, instead of using $S_\bgam^\times(\balp,N,\psi)$, one would need to count pairs $(n,a_k) \in \bN \times \bZ$ such that
\[
n \le N, \qquad
\| n \alp_1 - \gam_1 \| \cdots \| n \alp_{k-1} - \gam_{k-1} \| \cdot |n \alp_k - a_k| < \psi(n).
\]
The latter counting function is greater than or equal to the former, so the reader should not be alarmed that our lower bound for it could far exceed $\Psi_k^\times(N)$ if $\psi$ were to be constant or decay very slowly.
\end{remark}

A natural `uniform' companion to
$S^\times_{\bgam}
(\balp,N,\psi)$ replaces $\psi(n)$ by $\psi(N)$ in the definition, giving rise to the counting function
\[
S^\times_{\bgam,\mathrm{unif}}
(\balp,N,\psi):=
\# \{
n \le N:
\| n \alp_1 - \gam_1 \| \cdots \| n \alp_k - \gam_k \| < \psi(N)
\}.
\]
When $\psi$ is not decaying too rapidly, 
lattice point counting can be successfully used
to obtain asymptotic formulas for 
$S^\times_{\mathbf{0},\mathrm{unif}}(\balp,N,\psi)$. 
We refer to the works of Widmer~\cite{Wid2017} and Fregoli~\cite{Fre2021}.

\subsection*{Notation.}
For complex-valued functions $f$ and $g$, we write $f \ll g$ or $f = O(g)$ if $|f| \le C|g|$ pointwise for some constant $C$, sometimes using a subscript to record dependence on parameters, and $f \asymp g$ if $f \ll g \ll f$. We write $f \sim g$ if $f/g \to 1$, and $f = o(g)$ if $f/g \to 0$.

\subsection*{Funding and acknowledgements}

NT was supported by a Schr\"odinger Fellowship of the Austrian Science Fund (FWF): project J 4464-N.
We thank Jakub Konieczny for raising the question, as well as for feedback on an earlier version of this manuscript, and we thank Christoph Aistleitner for a helpful conversation.

\section{Counting approximations on fibres}

In this section, we prove Theorem \ref{FibreThm}.
Fix $\eps > 0$ such that
\begin{equation} \label{epsrange}
10 k \sqrt{\eps} 
\le \min \left \{ \frac{1}{w} \: - \: \frac{k-2}{k-1}, \kap \right \} \in (0,1).
\end{equation}
We write
\[
\balp = (\alp_1,\ldots,\alp_{k-1}),
\qquad
\bgam = (\gam_1,\ldots,\gam_{k-1}).
\]
Define
\[
G = \{ n \in \bN: \| n \alp_i - \gam_i \| \ge n^{-\sqrt \eps} \quad (1 \le i \le k-1) \}
\]
and
\[
U_N(\balp,\bgam,\psi) = \sum_{\substack{n \le N \\ n \in G}} \frac{\varphi(n) \psi(n)}{n \| n \alp_1 - \gam_1 \| \cdots \| n \alp_{k-1} - \gam_{k-1} \|}.
\]
We showed in \cite[Equation (6.3)]{CT2019} that
\[
U_N(\balp,\bgam,\psi) \gg_\balp \: \tilde \Psi_k^\times(N),
\]
so the unboundedness assumption needed to apply Theorem \ref{ABH} to the approximating function
\[
n \mapsto \frac{\psi(n)}{ \| n \alp_1 - \gam_1 \| \cdots \| n \alp_{k-1} - \gam_{k-1} \| } 1_G(n) \in [0,1/2]
\]
is met. Thus, for almost all $\alp_k$, we have
\begin{equation} \label{ABHapp}
S^\times_{(\gam_1,\ldots,\gam_{k-1},0)}((\alp_1,\ldots,\alp_k),N,\psi) \gg U_N(\balp,\bgam,\psi).
\end{equation}

The implied constant in \cite[Equation (6.3)]{CT2019} was allowed to depend on $\balp$, however the following more uniform statement holds with essentially the same proof.

\begin{lemma} \label{uniform} Assume that $\psi$ is non-increasing.
Let $\balp = (\alp_1,\ldots,\alp_{k-1})$ be a real vector such that $ \ome^\times(\balp) = w < \frac{k-2}{k-1}$, and let $\bgam = (\gam_1,\ldots,\gam_{k-1}) \in \bR^{k-1}$. Then there exist $c = c(k,w,\kap) > 0$ and $N_0 = N_0(\balp)$ such that
\[
U_N(\balp,\bgam,\psi) \ge c \sum_{n \le N} \psi(n) (\log n)^{k-1} \qquad (N \ge N_0). 
\]
\end{lemma}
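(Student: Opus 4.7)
The plan is to revisit the proof of \cite[Equation (6.3)]{CT2019} line by line and show that the $\balp$-dependence there can be isolated into a single threshold $N_0(\balp)$, after which all implied constants are functions of $k$, $w$, and $\kap$ alone. The key observation is that the argument of CT2019 is quantitative throughout; the only reason $\balp$ enters is to ensure that certain Bohr-counting inputs have reached their asymptotic regime.

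Concretely, I would split the sum over dyadic ranges $M < n \le 2M$ with $M$ a power of two. Since $\psi$ is non-increasing, $\psi(n) \asymp \psi(M)$ on each range, so the claim reduces to showing
\[
\sum_{\substack{M < n \le 2M \\ n \in G}} \frac{\varphi(n)}{n \prod_{i=1}^{k-1} \|n\alp_i - \gam_i\|} \gg (\log M)^{k-1}
\]
for all $M \ge N_0(\balp)$. To prove this, decompose each $n$ according to the tuple $\bj = (j_1,\ldots,j_{k-1})$ with $\|n\alp_i-\gam_i\| \in [2^{-j_i-1},2^{-j_i})$; the constraint $n \in G$ forces $0 \le j_i \le \sqrt{\eps}\log_2 M$. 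For each such $\bj$, the Bohr-set estimates of \cite{Cho2018, CT2019}, applied to the intersection of one-dimensional Bohr sets with the $\varphi(n)/n \ge c_0$ subset of $(M,2M]$ (still of positive density by a standard sieve), deliver
\[
\#\{M < n \le 2M : \|n\alp_i - \gam_i\| \in [2^{-j_i-1},2^{-j_i}),\ \varphi(n)/n \ge c_0\} \gg M \cdot 2^{-(j_1+\cdots+j_{k-1})}
\]
with an implied constant depending only on $k$, valid as soon as $M \ge N_0(\balp)$. Multiplying by $2^{j_1+\cdots+j_{k-1}}/M$ from the reciprocal product and the $1/n$ weight, every admissible tuple $\bj$ contributes a constant amount, so the problem reduces to counting admissible tuples.

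For the counting step, the multiplicative-exponent hypothesis together with \eqref{epsrange} ensures $\prod_i \|n\alp_i - \gam_i\| \gg n^{-w-\eps}$ for all $n \ge N_0(\balp)$ (a homogeneous-to-inhomogeneous transfer proved in \cite{CT2019}; condition $\psi(n) < n^{-\kap}$ gives the margin needed for this transfer to survive the subsequent applications). This forces $j_1+\cdots+j_{k-1} \le (w+\eps)\log_2 M$, and the number of lattice points in the simplex $\{\bj \in \bZ_{\ge 0}^{k-1} : \sum j_i \le (w+\eps)\log_2 M\}$ is $\asymp_{k,w} (\log M)^{k-1}$. Summing the per-tuple contributions and then summing over dyadic $M \le N$, and absorbing the tail $n \le N_0(\balp)$ into the implied constant (legal since the conclusion is only asserted for $N \ge N_0(\balp)$), produces the claimed lower bound.

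The step I expect to be the main obstacle is verifying that the Bohr-set lower count above holds with constants \emph{independent of $\balp$ and $\bgam$} once $M \ge N_0(\balp)$; CT2019 performs this count but permits a $\balp$-dependent constant, so one must re-examine the equidistribution input and check that the only $\balp$-dependence there is the time at which equidistribution of $(n\alp_1,\ldots,n\alp_{k-1}) \bmod 1$ becomes effective. The restriction to $n \in G$ precisely removes the short Bohr-set scales where Fourier tails could conceivably introduce $\balp$-dependent losses, and condition \eqref{epsrange} leaves a uniform safety gap between the Bohr radii and the multiplicative-exponent bound, so the same argument goes through with constants depending only on $k$, $w$, and $\kap$.
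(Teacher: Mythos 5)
Your strategy is essentially the paper's: rerun the CT2019 argument (structural Bohr-set counting, the $\varphi(n)/n$-weighted count, dyadic decomposition over the sizes of $\|n\alp_i-\gam_i\|$, and a count of $\gg_{k,\eps}(\log N)^{k-1}$ contributing tuples), checking that every implied constant depends only on $k$ and $\eps=\eps(k,w,\kap)$ once $N\ge N_0(\balp)$. You also correctly identify the crux, namely that the only $\balp$-dependence in the Bohr-set estimates is the threshold past which they become effective, with the $G$-restriction and \eqref{epsrange} supplying the uniform safety margin.

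Three points to tighten. First, a bookkeeping slip: since $\sum_{M<n\le 2M}\frac{1}{\prod_i\|n\alp_i-\gam_i\|}\asymp M(\log M)^{k-1}$ and $\varphi(n)/n\asymp 1$, the correct per-block target is $\gg M(\log M)^{k-1}$, not $(\log M)^{k-1}$; your stated target would only yield $\sum_M\psi(M)(\log M)^{k-1}$, which is far smaller than $\tilde\Psi_k^\times(N)$. The error cancels against a compensating one later (each tuple $\bj$ contributes $\gg M$, not $O(1)$, since there are $\gg M2^{-\sum j_i}$ values of $n$ each contributing $\gg 2^{\sum j_i}$), so the argument survives, but as written the reduction is inconsistent. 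Second, the step you dismiss as ``a standard sieve'' --- lower-bounding $\sum_{n\in B}\varphi(n)/n$ over a $(k-1)$-dimensional Bohr set --- is genuinely the technical heart of \cite[Section 4]{CT2019}: Bohr sets are not unions of a few arithmetic progressions, and the paper handles the weight directly via Davenport's lattice-point count applied to the sublattices of multiples of $d$, summed over $d$ (whence the $\sum_p p^{-1-\eps}$ in the constant), rather than by restricting to a positive-density subset. Third, the lower bound $\prod_i\|n\alp_i-\gam_i\|\gg n^{-(k-1)\sqrt\eps}$ for $n\in G$ comes directly from the definition of $G$, not from a homogeneous-to-inhomogeneous transfer (no such transfer holds in general); the multiplicative-exponent hypothesis is instead used inside the Bohr-set counting to guarantee the lower bound in \eqref{eq: explicit order of magn}. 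Consequently the set of contributing tuples is the box $\{0\le j_i\le\sqrt\eps\log_2 M\}$ imposed by \eqref{delrange}, not the simplex $\sum_i j_i\le(w+\eps)\log_2 M$ (for tuples in the simplex but outside the box the Bohr-set lower bound is not justified); the box still has $\asymp_{k,\eps}(\log M)^{k-1}$ elements, so the conclusion stands.
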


\begin{proof}
Recall that $\eps > 0$ satisfies \eqref{epsrange}.
First, we verify that the implicit constants in the
`inner structure' (\cite[Lemma 3.1]{CT2019})
and `outer structure'
(\cite[Lemma 3.2]{CT2019}) lemmas
depend only on $k$. That is, there exist positive constants $c_1 = c_1(k)$ and $c_2 = c_2(k)$ such that
if 
\begin{equation} \label{delrange}
N^{\sqrt \eps} \leq \delta_i \leq 1/2 \quad (1\leq i \leq k-1),
\qquad N\geq N_0
\end{equation}
then
\begin{equation}\label{eq: explicit order of magn}
c_1 \leq \frac{
 \# B_{\boldsymbol{\alpha}}^{\bzero} (N;\bdel)}
 {\del_1 \ldots \del_{k-1}N }
 \leq c_2,
\end{equation}
where
\[
B_\balp^{\bgam}(N;\bdel)
= \{ n \in \bZ: |n| \le N,
\| n \alp_i - \gam_i \| \le \del_i \: (1 \le i \le k-1) \}.
\]
The proofs of \cite[Lemma 3.1]{CT2019} and \cite[Lemma 3.2]{CT2019} are quite similar to one another, so we confine our discussion to the former. The only essential source of implied constants in its proof comes from the first finiteness theorem, and that implied constant only depends on $k$. The other implied constants that we introduced can easily be made absolute. For example, with $\lam,\lam_1$ as defined in \cite[\S 3.1]{CT2019}, the upper bound
\[
\| n \alp_1 \| \cdots \| n \alp_{k-1} \| 
\leq (\lambda_1/(10 \lambda))^{k-1} 
\del_1 \cdots \del_{k-1} \leq 
(\lambda_1/ \lambda)^{k-1} 
\]
and, for $n\geq N_{0}$, the lower bound
\[
\| n \alp_1 \| \cdots \| n \alp_{k-1} \| 
\geq n^{\eps-w} 
\geq (N \lam_1/(10\lam))^{\eps-w}.
\]
We thus have \eqref{eq: explicit order of magn}.

The construction of the base point $b_0$ in \cite[Section 3.2]{CT2019}
only requires $N_0$ to be large, and does not affect the implied constants as long as $N \ge N_0$. Thus, we have
\[
c_1 \leq \frac{
 \# B_{\boldsymbol{\alpha}}^{\bgam} (N;\bdel)}
 {\del_1 \cdots \del_{k-1}N }
 \leq c_2,
\]
subject to \eqref{delrange}.

With this at hand, the argument of \cite[Section 4]{CT2019} yields
$$
\sum_{n\in \hat{B}_{\boldsymbol{\alpha}}^{\bgam} (N;\bdel)}
\frac{\varphi(n)}{n}
\gg_{k,\eps} \: \del_1 \ldots \del_{k-1} N,
$$ 
where $\hat{B}_{\boldsymbol{\alpha}}^{\bgam} (N;\bdel)
= B_{\boldsymbol{\alpha}}^{\bgam} (N;\bdel)
\cap [N^{\sqrt{\varepsilon}},N]$.
The implied constant comes from Davenport's lattice point counting estimate \cite[Theorem 4.2]{CT2019} and the value of 
$\displaystyle \sum_{p \text{ prime}} p^{-1-\varepsilon}$,
and therefore only depends on $k,\eps$. 

Decomposing the range of summation into $(k-1)$-tuples of dyadic ranges for $(\del_1,\ldots,\del_{k-1})$, together with partial summation, as in \cite[Sections 5 and 6]{CT2019}, then gives
\[
U_N(\balp,\bgam,\psi) \gg_{k,\eps} \: \sum_{n \le N} \psi(n) (\log n)^{k-1} \qquad (N \ge N_0).
\]
Indeed, this process involves at least 
\[
c_3 (\log N)^{k-1}
\]
many $(k-1)$-tuples of dyadic ranges, where $c_3 = c_3(k,\eps) > 0$.

Finally, note that $\eps$ can be chosen to only depend on $k,w,\kap$.
\end{proof}

Combining Lemma \ref{uniform} with \eqref{ABHapp} completes the proof of Theorem \ref{FibreThm}.

\begin{appendix}

\section{Computing a volume}

Here we deduce Theorem \ref{WangYu} from \cite[Theorem 4.6]{Har1998} and the argument of Wang and Yu \cite[Section 1]{WY1981}. For $\lam > 0$, define
\[
\cB_k(\lam) = \{ \bx \in [0,1]^k: 0 \le x_1 \cdots x_k \le \lam \}
\]
and
\[
\cC_k(\lam) =  \{ \bx \in [0,1/2]^k: 0 \le x_1 \cdots x_k \le \lam \}
\]
By symmetry and \cite[Theorem 4.6]{Har1998}, for almost all $\balp \in \bR^k$, we have
\[
S_\bzero^\times(\balp, N, \psi) = T_k(N) + O(\sqrt{T_k(N)} (\log T_k(N))^{2+\eps}),
\]
where
\[
T_k(N) = 2^k \sum_{n \le N} \mu_k(\cC_k(\psi(n))).
\]
Thus, it remains to show that
\begin{equation} \label{vol}
T_k(N) \sim \Psi_k^\times(N) \qquad (N \to \infty).
\end{equation}

\begin{lemma} For $k \in \bN$ and $\lam > 0$, we have
\[
\mu_k(\cB_k(\lam)) =
\begin{cases}
1, &\text{if } \lam \ge 1 \\
\lam \sum_{s=0}^{k-1} \frac{(-\log \lam)^s}{s!},
&\text{if } 0 < \lam < 1.
\end{cases}
\]
\end{lemma}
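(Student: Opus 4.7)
The plan is to proceed by induction on $k$. The case $\lam \geq 1$ is immediate: every $\bx \in [0,1]^k$ satisfies $x_1 \cdots x_k \leq 1 \leq \lam$, so $\mu_k(\cB_k(\lam)) = 1$. For the case $0 < \lam < 1$, the base $k = 1$ gives $\cB_1(\lam) = [0,\lam]$, which has measure $\lam$, matching the claimed formula since the sum collapses to $s=0$.

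For the inductive step, assuming the formula for $k-1$, I would slice along the last coordinate and apply Fubini:
\[
\mu_k(\cB_k(\lam)) = \int_0^1 \mu_{k-1}(\cB_{k-1}(\lam/x_k)) \, dx_k.
\]
The integrand takes the constant value $1$ on $(0, \lam]$, where $\lam/x_k \geq 1$, and on $[\lam, 1]$ the inductive hypothesis gives
\[
\mu_{k-1}(\cB_{k-1}(\lam/x_k)) = \frac{\lam}{x_k} \sum_{s=0}^{k-2} \frac{(-\log(\lam/x_k))^s}{s!}.
\]
Splitting the integral at $\lam$ thus produces a contribution of $\lam$ from the first piece, plus, after the substitution $u = -\log(\lam/x_k) = \log x_k - \log \lam$ (which sends $\lam \mapsto 0$ and $1 \mapsto -\log \lam$, with $du = dx_k / x_k$), a contribution
\[
\sum_{s=0}^{k-2} \lam \int_0^{-\log \lam} \frac{u^s}{s!}\, du = \lam \sum_{s=0}^{k-2} \frac{(-\log \lam)^{s+1}}{(s+1)!} = \lam \sum_{s=1}^{k-1} \frac{(-\log \lam)^{s}}{s!}.
\]
Adding the initial $\lam$ (which is precisely the $s = 0$ term) completes the induction.

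There is no real obstacle here; the argument is a routine Fubini-plus-substitution calculation. The only bookkeeping step requiring mild care is the re-indexing $s \mapsto s+1$ after the substitution, which has the pleasant effect of absorbing the initial contribution of $\lam$ from the $(0,\lam]$ piece as the $s = 0$ term of the final sum.
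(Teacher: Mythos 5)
Your proof is correct and follows essentially the same route as the paper's: induction on $k$, Fubini slicing on one coordinate, splitting the integral at $\lam$, and the logarithmic substitution yielding $(-\log\lam)^{s+1}/(s+1)!$ before re-indexing. No issues.
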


\begin{proof} We induct on $k$. The base case is clear: $\mu_1(\cB_1(\lam)) = \min \{ \lam, 1\}$. Now let $k \ge 2$, and suppose the conclusion holds with $k-1$ in place of $k$. We may suppose that $0 < \lam < 1$. We compute that
\begin{align*}
\mu_k(\cB_k(\lam)) &= \int_0^1 \mu_{k-1} (\cB_{k-1}(\lam/x)) \d x = \lam + \int_\lam^1 \frac \lam x \sum_{s = 0}^{k-2} \frac{(\log(x/\lam))^s} {s!} \d x \\
&= \lam + \sum_{s=0}^{k-2} \frac{\lam}{s!} \int_1^{1/\lam} \frac{(\log y)^s}y \d y = \lam + \lam \sum_{s=0}^{k-2} \frac{ (- \log \lam)^{s+1}}{(s+1)!} \\
&= \lam \sum_{t=0}^{k-1} \frac{(-\log \lam)^t}{t!}.
\end{align*}
\end{proof}

In view of Schmidt's Theorem \ref{thm: Schmidt}, we may assume that $k \ge 2$. Now, as
\[
\mu_k(\cC_k(\lam)) 
= 2^{-k} \mu_k(\cB_k(2^k \lam)),
\]
and as $\psi(n) < 2^{-k}$ for large $n$, we have
\begin{align*}
T_k(N) &= O_{k,\psi}(1) + \sum_{n\le N} \psi(n) \sum_{s=0}^{k-1} \frac{(- \log (2^k \psi(n)))^s}{s!}  \\
&= \Psi_k^\times(N) + O_k(\Psi_{k-1}^\times(N)) + O_{k,\psi}(1).
\end{align*}
Since $\psi(n) \to 0$ as $n \to \infty$, we have $\Psi_{k-1}^\times(N) + 1 = o(\Psi_k^\times(N))$ and hence \eqref{vol}, completing the proof of Theorem \ref{WangYu}.
\end{appendix}

\providecommand{\bysame}{\leavevmode\hbox to3em{\hrulefill}\thinspace}

\end{document}